\documentclass[11pt]{amsart}

\usepackage{amsmath,amssymb,amsthm,mathtools}
\usepackage{microtype}
\usepackage[margin=1.12in]{geometry}
\usepackage[dvipsnames]{xcolor}
\usepackage[colorlinks=true,linkcolor=MidnightBlue,citecolor=MidnightBlue,urlcolor=MidnightBlue]{hyperref}
\usepackage[nameinlink,capitalise]{cleveref}

\allowdisplaybreaks
\numberwithin{equation}{section}

\newcommand{\R}{\mathbb{R}}
\newcommand{\Scal}{\operatorname{Scal}}
\newcommand{\Ric}{\operatorname{Ric}}
\newcommand{\tr}{\operatorname{tr}}
\newcommand{\diver}{\operatorname{div}}
\newcommand{\Vol}{\operatorname{Vol}}
\newcommand{\Id}{\operatorname{Id}}
\newcommand{\Acirc}{A^{\circ}}
\newcommand{\II}{\mathrm{II}}
\newcommand{\indm}{\operatorname{ind}_{-}}
\newcommand{\dmu}{\,d\mu}
\newcommand{\dV}{\,dV}

\newtheorem{theorem}{Theorem}[section]
\newtheorem{proposition}[theorem]{Proposition}
\newtheorem{lemma}[theorem]{Lemma}
\newtheorem{corollary}[theorem]{Corollary}
\newtheorem{remark}[theorem]{Remark}
\newtheorem{definition}[theorem]{Definition}
\newtheorem{conjecture}[theorem]{Conjecture}

\title[Sphere Theorems for Immersed Hypersurfaces with Constant Scalar Curvature]
{Sphere Theorems for Immersed Hypersurfaces with Constant Scalar Curvature} 

\author{Fagui Li}
\address{Frontier Interdisciplinary Domain, Beijing Institute of Technology, Zhuhai, Guangdong 519088, P. R. CHINA.}
\email{lifagui@bitzh.edu.cn}

\subjclass[2020]{53C42, 53C40, 53C20}
\keywords{Constant scalar curvature, immersed hypersurface,  two-convexity, Gromov  $k$-convexity,  Reilly formula}
\thanks{F. G. Li is partially supported by NSFC (No. 12271040 and 12501061), the Guangdong Provincial Association for Science and Technology Youth Talent Support Program (No. SKXRC2026413) and the Research Start-up Funding of Beijing Institute of Technology (No. 5640011253301).}
\hypersetup{
  pdftitle={Sphere theorems for immersed hypersurfaces with constant scalar curvature},
  pdfauthor={Fagui Li},
  pdfsubject={Rigidity of immersed hypersurfaces with constant scalar curvature},
  pdfkeywords={constant scalar curvature, immersed filling, Yau's conjecture, Reilly formula}
}

\begin{document}

\begin{abstract}
We prove sphere theorems for closed connected immersed hypersurfaces of constant scalar curvature in Euclidean space. Combining Ros's rigidity argument with the filling theorem of Huisken and Sinestrari, we show that every two-convex such hypersurface is a round sphere. As a consequence, Yau's conjecture holds for closed connected immersed hypersurfaces in \(\mathbb R^4\). Using Gromov's filling theorem, we also obtain higher-dimensional rigidity results under a principal-curvature index condition and an explicit scalar-curvature pinching condition.
\end{abstract}

\maketitle

\section{Introduction}
 Let
$
 F:M^n\looparrowright \R^{n+1}
$
be an immersion of a closed manifold, where
``closed'' means compact and without boundary.
Throughout the paper, unless otherwise stated, all manifolds are assumed
to be smooth and connected.
 Our detailed sign
conventions are fixed in \cref{sec:preliminaries}.  In particular, \(H\) denotes the trace, or unnormalized, mean curvature, and \(\sigma_2\) denotes the unnormalized second elementary symmetric function of the principal curvatures. In Euclidean space,
\[
 \Scal_M=2\sigma_2.
\]
We formulate Yau's problem \cite[Problem~31, p.~677]{Yau1982} in the
immersed setting considered here as the following conjecture.

\begin{conjecture}[Yau's conjecture \cite{Yau1982}]\label[conjecture]{conj:Yau-CSC}
Every closed immersed hypersurface in Euclidean space with constant scalar
curvature is isometric to a standard sphere.
\end{conjecture}

In dimension two, this conjecture follows from the classical
Hadamard--Liebmann theory; see
\cite[Section~1]{FonteneleNunez2018}.  For embedded hypersurfaces of
arbitrary dimension, Ros \cite{Ros1988} answered the question affirmatively; see also \cite{Ros1987,MontielRos1991} for the
corresponding higher-order mean-curvature framework.  Ros further
observed that his argument extends to an immersed hypersurface whenever
the boundary immersion extends across a compact manifold, after the
latter is endowed with the pullback Euclidean metric
\cite[Remark, p.~452]{Ros1987}.  A general immersion need not admit such
an extension or bound a well-defined Euclidean domain.  Rigidity was
previously obtained under
substantially stronger assumptions: nonnegative sectional curvature in
the work of Cheng--Yau \cite{ChengYau1977} and Ecker--Huisken
\cite{EckerHuisken1989}, nonnegative Ricci curvature in
\cite{Zheng1997}, local conformal flatness in \cite{Cheng2002}, and a
cohomogeneity-two hypothesis in \cite{Okayasu2005}; see also
Li \cite{Li1996} for related rigidity results for hypersurfaces with
constant scalar curvature in space forms.  In dimension
three, Cheng and Wan \cite{ChengWan1994} proved that a complete hypersurface in $\R^4$ with
nonzero constant mean curvature and constant scalar curvature is a
generalized cylinder $S^k(a)\times\R^{3-k}$, $k=1,2,3$; in particular, compactness leaves only the spherical
case.  More recently, Ge and Zhao \cite{GeZhao2026} proved that, for connected
hypersurfaces in real space forms, constancy of all Ricci eigenvalues is
equivalent to curvature homogeneity; in particular, a complete nonflat
Euclidean hypersurface of dimension $n\ge3$ with constant Ricci
eigenvalues is isoparametric.  These hypotheses are
stronger than constancy of the scalar curvature alone.  Fontenele and
N\'u\~nez  \cite{FonteneleNunez2018} proved a bounded complete rigidity theorem under the
additional assumption of constant mean curvature and recorded the
scalar-curvature-only problem as open.
To the best of our knowledge, Ros's observation has not previously
been combined with the filling theorems used here to obtain the
sphere theorems below. 
 The Cheng--Yau \cite{ChengYau1977} operator and its ellipticity
cone provide another basic viewpoint on constant scalar curvature.

The contribution of this paper is to combine Ros's pullback-metric
observation with two geometric sources of immersed fillings.  The
first is the theorem of Huisken and Sinestrari asserting that every
closed two-convex immersed hypersurface of dimension at least three
extends to an immersion of a compact handlebody
\cite[Corollary~1.2, pp.~138--139; see also
pp.~218--219]{HuiskenSinestrari2009}.  The second is Gromov's theorem
that a closed cooriented immersed hypersurface
$W^{N-1}\looparrowright\R^N$ bounds an immersed $N$-manifold if, for
the chosen coorientation, at least $k>N/2$ principal curvatures are
nonnegative \cite[p.~23]{Gromov1991}.  We refer to this as
\emph{Gromov's eigenvalue-count $k$-convexity}.  It is distinct from
the sum-type $p$-convexity used by Sha \cite{Sha1986,Sha1987} and from
the two-convexity appearing in the work of Fraser \cite{Fraser2002} and of
Huisken--Sinestrari \cite{HuiskenSinestrari2009}.  Our sign
convention agrees with Gromov's round-sphere normalization; this is
checked explicitly in \cref{sec:gromov}.  For a later treatment of
related mean-convexity and filling questions, see \cite{Gromov2014}.
The extension setup underlying the following definition already appears
in Ros \cite[Remark, p.~452]{Ros1987} and in Gromov
\cite[p.~23]{Gromov1991}.  In Ros's remark, the pullback-metric step is
attributed to U.~Pinkall.  We introduce the following terminology for
this geometric setup.

\begin{definition}\label[definition]{def:immersed-flat-filling}
An immersed filling of
$F:M^n\looparrowright\R^{n+1}$ consists of a compact
$(n+1)$-manifold $\Omega$ with boundary, a diffeomorphism
$\Phi:\partial\Omega\to M$, and an immersion
$
 G:\Omega\looparrowright\R^{n+1}
$
satisfying $G|_{\partial\Omega}=F\circ\Phi$.  We usually identify
$\partial\Omega$ with $M$ via $\Phi$.  If
$
 \bar g:=G^*\langle\cdot,\cdot\rangle,
$
then we call $(\Omega,\bar g,G,\Phi)$ an
immersed flat filling of $F$.
\end{definition}

Under either geometric hypothesis above, the corresponding filling theorem produces an
immersed filling in the sense of \cref{def:immersed-flat-filling}. The adjective ``flat'' is intrinsic:
on the interior $\Omega^\circ:=\Omega\setminus\partial\Omega$, the map $G$ is a local
diffeomorphism and, by the definition of $\bar g$, a local isometry. Hence
$\operatorname{Rm}_{\bar g}=0$ on $\Omega^\circ$, and therefore on all of $\Omega$
by smoothness.
The metric $\bar g$ induces the original
metric on the boundary, so Reilly's formula can be applied intrinsically
on $(\Omega,\bar g)$ even when $G$ is not injective.

The following statement is the $r=2$ specialization of Ros's
argument, together with the extension noted in
\cite[Theorem~2 and the Remark on p.~452]{Ros1987}.
We record it in the terminology introduced above and include the
details needed for our application.

\begin{theorem}[Ros \cite{Ros1987}]\label{thm:flat-filling}
Let $n\ge2$, and let
$
F:M^n\looparrowright\R^{n+1}
$
be a closed immersed hypersurface with constant
scalar curvature. If $F$ admits an immersed filling, then $F$ is a
diffeomorphism onto a round sphere.
\end{theorem}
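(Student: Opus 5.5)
We follow Ros's integral-formula argument for $r=2$, run intrinsically on the immersed flat filling $(\Omega,\bar g,G,\Phi)$ given by \cref{def:immersed-flat-filling}. The data we use are as follows. The metric $\bar g$ is flat and restricts to the induced metric on $M=\partial\Omega$. The unit normal $\nu$ along $\partial\Omega$ is the outward normal of $(\Omega,\bar g)$ pushed forward by $dG$. With respect to $\nu$ the second fundamental form $A$ of $\partial\Omega\subset(\Omega,\bar g)$ is the second fundamental form of $F$. Volumes and boundary integrals are computed intrinsically, so injectivity of $G$ is never needed.

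\textbf{Step 1: positivity and the Newton tensor.} Since $\Scal_M=2\sigma_2$ is constant and $M$ is closed in $\R^{n+1}$, there is a point where all principal curvatures are positive, namely the point of contact with the smallest enclosing sphere. At that point $\sigma_2>0$, so $\sigma_2\equiv c>0$. By G\aa rding's inequality, $H^2\ge\sigma_1^2\ge \frac{2n}{n-1}\sigma_2>0$, so $H$ never vanishes. On a connected $M$ we may therefore choose $\nu$ so that $H>0$. The $2$-positive cone is connected, so $A$ stays in the G\aa rding cone $\Gamma_2$ and the Newton tensor $T_1=H\,\Id-A$ is positive definite. We then need to check that this sign of $\nu$ agrees with the outward normal of $\Omega$. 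I would do this by integrating $\diver_{\bar g}$ of the position-type vector field $G^*\partial$ or by a Minkowski formula. If the signs disagree, we replace $\nu$ and the argument below yields a contradiction. This is the delicate point.

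\textbf{Step 2: Minkowski formulas.} The vector field $X=G$ (position vector) pulls back to a field on $\Omega$ with $\bar\nabla X=\Id$. Its boundary support function is $p=\langle F,\nu\rangle$. The divergence theorem on $(\Omega,\bar g)$ gives
\[
(n+1)\Vol(\Omega)=\int_M p\dmu .
\]
The Hsiung--Minkowski formulas on $M$ give $\int_M (n-1)H\dmu=2\int_M p\,\sigma_2\dmu$, together with $\int_M n\dmu=\int_M pH\dmu$.

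\textbf{Step 3: Reilly.} Solve $\bar\Delta f=1$ in $\Omega$ with $f=0$ on $\partial\Omega$. Reilly's formula on the flat $(\Omega,\bar g)$, combined with $|\bar\nabla^2 f|^2\ge (\bar\Delta f)^2/(n+1)$, gives Ros's inequality
\[
\int_M \frac{1}{H}\dmu\ \ge\ \frac{n+1}{n}\Vol(\Omega),
\]
with equality only if $\bar\nabla^2 f=\frac{1}{n+1}\bar g$, which forces $M$ to be umbilic. Next, Newton--Maclaurin $\sigma_2\le \frac{n-1}{2n}H^2$ gives $\frac1H\le\frac{(n-1)H}{2n\,\sigma_2}$. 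Using $\sigma_2\equiv c$ and the formulas of Step 2,
\[
\int_M\frac1H\dmu\le \frac{n-1}{2nc}\int_M H\dmu=\frac1n\int_M p\dmu=\frac{n+1}{n}\Vol(\Omega).
\]
So equality holds throughout, and $M$ is totally umbilic.

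\textbf{Step 4: conclusion.} A closed connected totally umbilic immersed hypersurface with $H>0$ is a round sphere $S$. Hence $F:M\to S$ is a local diffeomorphism between closed manifolds, i.e.\ a covering. Since $S^n$ is simply connected for $n\ge2$, $F$ is a diffeomorphism.

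The main obstacle is Step 1: matching the orientation (the outward normal of $\Omega$) with the choice $H>0$, and justifying Reilly's formula and the boundary identification for a merely immersed $\Omega$. The latter holds because everything is intrinsic in $\bar g$.
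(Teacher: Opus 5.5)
Your proposal is correct and follows essentially the paper's route: positivity of $\sigma_2$ from an extremal contact point, the identity $(n+1)\Vol_{\bar g}(\Omega)=\int_M p\,d\mu$ from the pulled-back radial field, the first Minkowski formula, the Reilly-based Heintze--Karcher--Ros inequality, and the Newton inequality forced into equality, ending with the same covering argument. The orientation issue you flag is settled by combining your two Step~2 identities with $\nu=dG(\eta)$: they give $(n-1)\int_M H\,d\mu=2\sigma_2(n+1)\Vol_{\bar g}(\Omega)>0$, so the nowhere-vanishing $H$ is positive for the outward normal, whereas the paper runs the same computation in the other direction (it fixes the $H>0$ normal, gets a positive integrated support function from Minkowski, and concludes that $dG(\eta)=\nu$).
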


Combining \cref{thm:flat-filling} with the filling theorem of
Huisken--Sinestrari \cite{HuiskenSinestrari2009} gives our main sphere theorem.

\begin{theorem}\label{thm:two-convex}
Let $n\ge3$, and let $F:M^n\looparrowright\R^{n+1}$ be a closed two-convex immersed hypersurface.  If the scalar curvature of $M$ is constant, then $F$ is a diffeomorphism onto a round sphere.
\end{theorem}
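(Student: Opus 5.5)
The plan is to reduce \cref{thm:two-convex} to \cref{thm:flat-filling} by producing an immersed filling in the sense of \cref{def:immersed-flat-filling}.

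\emph{Step 1: fix the convention.} Since $F$ is two-convex, we choose the unit normal $\nu$ for which the sum of any two principal curvatures is positive, $\kappa_i+\kappa_j>0$ for $i\ne j$. This is the normalization used by Huisken and Sinestrari. We must check that our sign convention for the second fundamental form agrees with theirs, so that ``two-convex'' has the same meaning in both places. Two-convexity implies $H>0$, so the closed hypersurface is in particular cooriented; note also that $M$ is connected and $n\ge3$.

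\emph{Step 2: obtain the filling.} We invoke \cite[Corollary~1.2]{HuiskenSinestrari2009}. Every closed two-convex immersed hypersurface $F:M^n\looparrowright\R^{n+1}$ with $n\ge3$ is diffeomorphic to $S^n$ or to a finite connected sum of copies of $S^{n-1}\times S^1$. Moreover, $F$ extends to an immersion $G$ of a compact handlebody $\Omega$ (a ball or a boundary connected sum of copies of $B^n\times S^1$) with $\partial\Omega\cong M$ and $G|_{\partial\Omega}=F\circ\Phi$. This is exactly an immersed filling. The main obstacle is making sure that the cited statement really delivers this. We need a smooth compact $(n+1)$-manifold with boundary, an immersion up to the boundary, and a boundary restriction that is $F$ itself, with no merely regularly homotopic replacement of $F$. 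This is where we would read the cited pages (pp.~138--139 and 218--219) carefully. The extension there comes from running mean curvature flow with surgery and gluing the swept-out region together with the surgery caps, so the boundary datum is the initial immersion, as required. If the statement were only up to regular homotopy, a collar $M\times[0,1]$ immersed via the homotopy could be glued on. Since \cref{thm:flat-filling} needs only the existence of some immersion, this collar would cause no harm.

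\emph{Step 3: conclude.} Once the filling is in hand, $F$ is a closed immersed hypersurface with $n\ge3\ge2$, constant scalar curvature, and an immersed filling. \cref{thm:flat-filling} then says directly that $F$ is a diffeomorphism onto a round sphere. As a consistency check, a posteriori $M\cong S^n$ and $\Omega$ is a ball, which matches the handlebody classification from Step 2.
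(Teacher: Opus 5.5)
Your proposal follows the paper's proof: \cref{thm:HS} (Huisken--Sinestrari's Corollary~1.2) supplies the immersed filling, and \cref{thm:flat-filling} finishes. Two small corrections: the paper's two-convexity is the weak condition $\lambda_1+\lambda_2\ge0$, so $H>0$ comes from \cref{lem:positivity} rather than from two-convexity; and your regular-homotopy fallback is false as stated, because $(p,t)\mapsto F_t(p)$ immerses $M\times[0,1]$ only if $\partial_tF_t$ has nonvanishing normal component, which holds for mean curvature flow with $H>0$ but not for a general regular homotopy.
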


Here and below, two-convexity is understood in the weak sense used by
Huisken and Sinestrari: after choosing a mean-convex unit normal and
ordering the principal curvatures as
$\lambda_1\le\cdots\le\lambda_n$, one has
\[
 \lambda_1+\lambda_2\ge0.
\]
This is a \emph{sum-type} convexity condition; compare Fraser's strict
two-convexity and Sha's $p$-convexity \cite{Fraser2002,Sha1986,Sha1987}.
It is not the same notion as Gromov's eigenvalue-count $k$-convexity
used in \cref{sec:gromov}.  For a constant-scalar-curvature immersion,
the orientation with $H>0$ will be constructed intrinsically in
\cref{lem:positivity}.  In dimension three the curvature equation then
implies strict two-convexity.

\begin{corollary}\label[corollary]{cor:dimension-three}
Every closed immersed hypersurface
$
 F:M^3\looparrowright\R^4
$
with constant scalar curvature is a diffeomorphism onto a round $3$-sphere. In particular, Yau's conjecture holds in dimension three.
\end{corollary}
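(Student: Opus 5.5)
The plan is to reduce the corollary to \cref{thm:two-convex} with $n=3$. Concretely, I would show that every closed immersed hypersurface $F:M^3\looparrowright\R^4$ with constant scalar curvature is two-convex, in fact strictly, once the unit normal is chosen with $H>0$.

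\textbf{Step 1 (sign of the constant).} Since $\Scal_M=2\sigma_2$ is constant, first show that $\sigma_2$ is a positive constant. A closed hypersurface in Euclidean space has an elliptic point: take a point of $M$ farthest from the origin. There all principal curvatures have a common sign and are bounded away from zero in absolute value by $1/\max|F|$. Hence $\sigma_2>0$ there, and by constancy $\sigma_2\equiv c>0$ everywhere.

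\textbf{Step 2 (global orientation with $H>0$).} Since $H^2=|A|^2+2\sigma_2\ge 2c>0$, the mean curvature vector $\vec H$ never vanishes. So $\nu:=\vec H/|\vec H|$ is a global smooth unit normal with $H>0$. This is presumably the content of \cref{lem:positivity}, and I would simply invoke it. With $\nu$ fixed, the principal curvatures lie in the Gårding cone
\[
\Gamma_2=\{\lambda:\sigma_1>0,\ \sigma_2>0\}.
\]

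\textbf{Step 3 (strict two-convexity in dimension three).} Order the curvatures as $\lambda_1\le\lambda_2\le\lambda_3$ and write $\sigma_2=\lambda_1(\lambda_2+\lambda_3)+\lambda_2\lambda_3=c>0$. I want $\lambda_1+\lambda_2>0$, and split into cases.
\begin{itemize}
\item If $\lambda_1\ge0$, then $\lambda_1+\lambda_2\ge0$. Equality would force $\lambda_1=\lambda_2=0$, giving $\sigma_2=0$, a contradiction.
\item If $\lambda_1<0$, suppose for contradiction that $\lambda_2\le-\lambda_1=:a$, with $a>0$.
\begin{itemize}
\item If $\lambda_2\le0$, then $\lambda_1\lambda_2\ge0$, but then $H>0$ forces $\lambda_3>a\ge|\lambda_2|$. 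Checking that $\sigma_2<0$ here amounts to the standard fact that in $\Gamma_2$ at most one eigenvalue is negative.
\item So take $0<\lambda_2\le a$. Then $\lambda_2+\lambda_3>0$ and $\sigma_2=-a(\lambda_2+\lambda_3)+\lambda_2\lambda_3\le-\lambda_2(\lambda_2+\lambda_3)+\lambda_2\lambda_3=-\lambda_2^2<0$, a contradiction.
\end{itemize}
\end{itemize}
In all cases $\lambda_1+\lambda_2>0$.

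For the standard fact, the cleanest route is the identity
\[
(\lambda_1+\lambda_2)\,\sigma_1=\sigma_2+(\lambda_1+\lambda_2)^2-\lambda_1\lambda_2 .
\]
It follows by expanding, since $\sigma_2=\lambda_1\lambda_2+(\lambda_1+\lambda_2)\lambda_3$. If $\lambda_1,\lambda_2\le0$, the right-hand side is still at least $\sigma_2-\lambda_1\lambda_2+(\lambda_1+\lambda_2)^2$. Since $(\lambda_1+\lambda_2)^2\ge4\lambda_1\lambda_2\ge\lambda_1\lambda_2$, the right-hand side is positive, while the left-hand side is $\le0$, a contradiction. The same identity also handles the case $\lambda_1<0<\lambda_2$ directly: the right-hand side is positive because $\lambda_1\lambda_2<0$, and $\sigma_1>0$, so $\lambda_1+\lambda_2>0$. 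So I would replace the case analysis above with this one identity.

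\textbf{Step 4.} The hypersurface is now two-convex in the weak sense of Huisken--Sinestrari with respect to the mean-convex normal, and $n=3$. So \cref{thm:two-convex} applies and $F$ is a diffeomorphism onto a round $3$-sphere, which is Yau's conjecture in dimension three. The main obstacle I expect is Step 2: making sure the normal used in the two-convexity hypothesis is globally defined, so that no orientability assumption on $M$ is needed. Using $\vec H$ resolves this, since $\vec H$ is defined without choosing a normal.
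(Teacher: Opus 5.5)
Your proposal is correct and follows the paper's route: positivity of $\sigma_2$ at a farthest point, the global normal $\vec H/|\vec H|$ with $H>0$, pointwise strict two-convexity, and then \cref{thm:two-convex} with $n=3$. The only difference is the algebra in Step 3. The paper just notes $\lambda_1+\lambda_2=H-\lambda_3>0$ because $H^2=|A|^2+2\sigma_2>\lambda_3^2$ (i.e.\ $P_1>0$); equally, the right-hand side of your identity equals $\sigma_2+\lambda_1^2+\lambda_1\lambda_2+\lambda_2^2\ge\sigma_2>0$, so your case split is unnecessary.
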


There is also a genuinely high-dimensional consequence.  For the
orientation with $H>0$, let
\[
 \indm(A_p)
 :=\#\{i:\lambda_i(p)<0\}
\]
denote the negative inertia index of the shape operator, counted with
multiplicity.
\begin{theorem}\label{thm:negative-index}
Let $n\ge4$, and let
$F:M^n\looparrowright\R^{n+1}$ be a closed immersed
hypersurface with constant scalar curvature.  Let $\nu$ be the global
unit normal furnished by \cref{lem:positivity}, chosen so that $H>0$, and let
$A=A_\nu$.  If
\begin{equation}\label{eq:negative-index-hypothesis}
 \indm(A_p)
 \le \left\lfloor\frac{n-2}{2}\right\rfloor
 \qquad\text{for every }p\in M,
\end{equation}
then $F$ is a diffeomorphism onto a round sphere.
\end{theorem}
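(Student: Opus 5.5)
The plan is to reduce \cref{thm:negative-index} to \cref{thm:flat-filling} by producing an immersed filling from Gromov's filling theorem \cite[p.~23]{Gromov1991}, with $N=n+1$ and the coorientation given by the normal $\nu$ of \cref{lem:positivity} (so that $H>0$). Gromov's theorem requires that at every point at least $k>N/2=(n+1)/2$ principal curvatures be nonnegative for the chosen coorientation. By \eqref{eq:negative-index-hypothesis}, the number of nonnegative principal curvatures at each $p$ is
\[
 n-\indm(A_p)\ \ge\ n-\left\lfloor\frac{n-2}{2}\right\rfloor .
\]
So the first step is the elementary check that $n-\lfloor (n-2)/2\rfloor>(n+1)/2$. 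For $n=2m$ the left side is $m+1>m+\tfrac12$. For $n=2m+1$ it is $2m+1-(m-1)/1\cdot 0-\lfloor (2m-1)/2\rfloor=m+2>m+1$. Hence we may take $k=n-\max_p\indm(A_p)$, which is constant and satisfies $k>N/2$. Note that Gromov's condition is a count, so we need the nonnegative eigenvalues (multiplicity counted), which is exactly $n-\indm$. I expect $n\ge4$ is used only to make the hypothesis meaningful, i.e.\ to have $\lfloor (n-2)/2\rfloor\ge1$; the argument itself does not seem to need it.

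The second step is to confirm that Gromov's hypotheses hold in our setting. $M$ is closed and $F$ is cooriented, since \cref{lem:positivity} supplies a global unit normal. The sign convention must also match: for the round sphere, Gromov's normalization should give all principal curvatures nonnegative for the coorientation in which the sphere bounds the ball. The paper states that this compatibility is checked in \cref{sec:gromov}, and I take it from there. With the $H>0$ normal, a round sphere has positive curvatures under our convention, which is consistent. The output of Gromov's theorem is a compact $(n+1)$-manifold $\Omega$ with an immersion $G$ whose boundary restriction is $F$ up to a diffeomorphism $\Phi$. That is precisely an immersed filling in the sense of \cref{def:immersed-flat-filling}.

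The final step is to apply \cref{thm:flat-filling}: $F$ has constant scalar curvature and admits an immersed filling, so it is a diffeomorphism onto a round sphere. I expect the main obstacle to be the precise matching with Gromov's statement. Two points need care: that the filling is produced on the side determined by the chosen coorientation, so that the induced boundary orientation agrees with what is used in the Reilly formula argument of \cref{thm:flat-filling}, and that the weak inequality (nonnegative, not positive) is what Gromov permits. If Gromov's version required strict positivity, I would first use constant scalar curvature with $H>0$ to argue that zero eigenvalues cause no problem. Failing that, I would perturb the count by noting that $k$ can exceed $N/2$ with room to spare only in the even case. So the main effort goes into verifying the weak form against \cite[p.~23]{Gromov1991}.
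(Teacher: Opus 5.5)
Your proposal is correct and is essentially the paper's proof. The index bound gives at least $k_0=n-\lfloor (n-2)/2\rfloor=\lfloor (n+3)/2\rfloor>(n+1)/2$ nonnegative principal curvatures for the $H>0$ coorientation, so \cref{thm:Gromov-filling} with $N=n+1$ yields an immersed filling, and \cref{thm:flat-filling} finishes. Your worry about which side the filling lies on is unnecessary: \cref{lem:normal-compatibility} shows $dG(\eta)=\nu$ for any immersed filling, via $\int_M u\,d\mu>0$. Your fallback arguments are also not needed, because the paper's statement of Gromov's theorem is already the weak (nonnegative) form.
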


\begin{corollary}\label{cor:scalar-pinching}
Let $n\ge4$, and let
$F:M^n\looparrowright\R^{n+1}$ be a closed immersed
hypersurface with constant scalar curvature.  Set
$
 m=\left\lfloor\frac{n+1}{2}\right\rfloor.
$
If
\begin{equation}\label{eq:scalar-pinching}
 H^2\le \frac{2m}{m-1}\,\sigma_2
 \qquad\text{on }M,
\end{equation}
then $F$ is a diffeomorphism onto a round sphere.  Equivalently, since
$\Scal_M=2\sigma_2$, it is enough to assume
\[
 H^2\le \frac{m}{m-1}\,\Scal_M.
\]
\end{corollary}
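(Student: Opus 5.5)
The plan is to deduce the corollary directly from \cref{thm:negative-index}. Concretely, I will show that the pinching condition \eqref{eq:scalar-pinching} forces the index bound \eqref{eq:negative-index-hypothesis} at every point.

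Throughout, I work with the global unit normal $\nu$ from \cref{lem:positivity}, so that $H>0$ on $M$, and set $A=A_\nu$.

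\textbf{Step 1: rewrite the pinching.} From $|A|^2=H^2-2\sigma_2$ and \eqref{eq:scalar-pinching}, i.e. $2\sigma_2\ge\frac{m-1}{m}H^2$, we get
\[
 |A|^2 \le H^2-\frac{m-1}{m}H^2=\frac{H^2}{m}.
\]

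\textbf{Step 2: count negative eigenvalues.} Fix $p\in M$ and let $q=\indm(A_p)$. Suppose $q\ge1$. Let $P$ be the set of indices with $\lambda_i\ge0$, so $\#P=n-q$. Since some $\lambda_i$ is strictly negative, we have the strict inequality $0<H<\sum_{i\in P}\lambda_i$. Applying Cauchy--Schwarz gives
\[
 H^2<(n-q)\sum_{i\in P}\lambda_i^2\le (n-q)|A|^2\le\frac{n-q}{m}H^2 .
\]
Dividing by $H^2>0$ yields $m<n-q$, that is, $q\le n-m-1$. If $q=0$, this bound holds trivially.

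\textbf{Step 3: match with the hypothesis of \cref{thm:negative-index}.} It remains to check that $n-m-1=\lfloor (n-2)/2\rfloor$.
\begin{itemize}
\item[] For $n=2k$: $m=k$, so $n-m-1=k-1=\lfloor(n-2)/2\rfloor$.
\item[] For $n=2k+1$: $m=k+1$, so $n-m-1=k-1=\lfloor(2k-1)/2\rfloor$.
\end{itemize}
Hence \eqref{eq:negative-index-hypothesis} holds at every $p\in M$, and since $n\ge4$, \cref{thm:negative-index} shows that $F$ is a diffeomorphism onto a round sphere. The equivalent formulation in terms of $\Scal_M$ follows immediately from $\Scal_M=2\sigma_2$.

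\textbf{Expected obstacle.} The only delicate point is the strictness in Step 2. It relies on $H>0$ and on at least one eigenvalue being strictly negative. This is exactly why the orientation from \cref{lem:positivity} is used rather than an arbitrary one. With that orientation fixed, the argument is elementary linear algebra.
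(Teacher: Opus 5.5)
Your proposal is correct and follows essentially the paper's route: it reduces the pinching condition to the negative-inertia bound \eqref{eq:negative-index-hypothesis} and then invokes \cref{thm:negative-index}. The paper argues by contradiction at a point violating the index bound and applies Cauchy--Schwarz to the strictly positive principal curvatures to get $|A|^2>H^2/m$. You run the same estimate directly over the nonnegative curvatures and check $n-m-1=\lfloor (n-2)/2\rfloor$; this is only a cosmetic difference.
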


For completeness, we recall Ros's argument in the case $r=2$ and in
our trace-mean-curvature convention.  We first show that
the constant second elementary symmetric curvature $\sigma_2$ is
positive and choose the global normal so that the trace mean curvature
$H$ is positive.  The first Newton transformation gives
\[
 (n-1)\int_M H\,d\mu=2\sigma_2\int_M u\,d\mu,
\]
where $u$ is the support function.  The filling identifies the latter
integral, with the correct sign, with
$(n+1)\Vol_{\bar g}(\Omega)$.  Reilly's formula then gives
\[
 \int_M\frac1H\,d\mu\ge \frac{n+1}{n}\Vol_{\bar g}(\Omega).
\]
On the other hand, the pointwise Newton inequality gives
\[
 \frac{H}{\sigma_2}\ge \frac{2n}{(n-1)H}.
\]
The constants match exactly, forcing equality and hence total
umbilicity.  Two-convexity and the negative-inertia hypothesis enter
only through their respective filling theorems; the scalar pinching
condition is reduced algebraically to the negative-inertia condition.

The paper is organized as follows.  In \cref{sec:preliminaries} we fix
the sign conventions and collect the basic identities and positivity
properties used throughout the paper.  Ros's rigidity argument on an
immersed flat filling, including the boundary orientation and
normalization details, is recalled in \cref{sec:flat-filling}.  In
\cref{sec:two-convex} we apply the Huisken--Sinestrari filling theorem
and prove \cref{cor:dimension-three}.  In \cref{sec:gromov} we use
Gromov's immersed filling theorem to prove \cref{thm:negative-index}
and \cref{cor:scalar-pinching}.  The final subsection isolates the
remaining higher-dimensional obstruction.

\section{Preliminaries and conventions}\label{sec:preliminaries}

Throughout, $D$ denotes the Euclidean connection and $\nabla$ the
Levi-Civita connection of the metric induced on $M$.  As usual, in
expressions involving $D$ we identify a tangent vector $X\in TM$ with
$dF(X)$.  On a filling we write $\bar\nabla$, $\bar\Delta$, and $dV$
for the corresponding connection, Laplacian, and volume measure;
$d\mu$ denotes the hypersurface measure.

If $\nu$ is a locally defined unit normal, the shape operator $A$ and
the scalar second fundamental form $\II$ are defined by
\[
 D_X\nu=A X,
 \qquad
 \II(X,Y)=\langle AX,Y\rangle.
\]
Thus
\[
 D_XdF(Y)=dF(\nabla_XY)-\II(X,Y)\nu,
\]
and a round sphere has positive principal curvatures with respect to
its outward normal.  The eigenvalues of $A$ are denoted by
$\lambda_1,\ldots,\lambda_n$ and are called the principal curvatures;
whenever they are ordered, we use
$\lambda_1\le\cdots\le\lambda_n$.  We use the unnormalized elementary
symmetric functions
\[
 H=\sigma_1=\tr A,
 \qquad
 \sigma_2=\sum_{1\le i<j\le n}\lambda_i\lambda_j,
 \qquad
 |A|^2=\sum_{i=1}^n\lambda_i^2.
\]
Thus $H$ is the trace, rather than the normalized, mean curvature.
The Euclidean Gauss equation gives
\begin{equation}\label{eq:gauss-scalar}
 \Scal_M=H^2-|A|^2=2\sigma_2.
\end{equation}
We also set
\[
 P_1=H\Id-A,
 \qquad
 \Acirc=A-\frac{H}{n}\Id.
\]
The Codazzi equation implies
\begin{equation}\label{eq:P1-divergence}
 \diver_M P_1=0,
\end{equation}
and direct traces give
\begin{equation}\label{eq:P1-traces}
 \tr P_1=(n-1)H,
 \qquad
 \tr(P_1A)=H^2-|A|^2=2\sigma_2.
\end{equation}
These are the standard identities underlying the Cheng--Yau operator
\cite{ChengYau1977}.

We will also use Reilly's formula with the same sign convention.  Let
$(\Omega^{n+1},\bar g)$ be compact with smooth boundary $M$, let
$\eta$ be the outward unit normal, and put
$\bar A X=\bar\nabla_X\eta$ and $\bar H=\tr_M\bar A$.  If
$f|_M=0$, Reilly's formula
\cite[Theorem~1 and formula~(14), p.~463]{Reilly1977} reduces to
\begin{equation}\label{eq:reilly-preliminary}
 \int_\Omega\left((\bar\Delta f)^2
 -|\bar\nabla^2f|^2
 -\Ric_{\bar g}(\bar\nabla f,\bar\nabla f)\right)\dV
 =\int_M\bar H f_\eta^2\dmu,
\end{equation}
where $\Ric_{\bar g}$ denotes the Ricci tensor of $\bar g$ and
$f_\eta=\langle\bar\nabla f,\eta\rangle$.  This displayed form also
fixes all boundary signs used below.

\subsection{Positivity and a Minkowski identity}\label{sec:positivity}
We first establish two elementary facts that will be used repeatedly below:
the canonical choice of global normal with $H>0$, together with the positivity
of $P_1$, and the first Minkowski identity.
\begin{lemma}\label[lemma]{lem:positivity}
Let $n\ge2$, let $F:M^n\looparrowright\R^{n+1}$ be closed, and
suppose that $\sigma_2$ is constant.  Then
$\sigma_2>0$.  Moreover, the normal bundle is trivial and the global
unit normal can be chosen so that
\[
 H>0\quad\text{on }M.
\]
For this choice, $P_1$ is positive definite everywhere.
\end{lemma}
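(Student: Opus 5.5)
First I show that the constant $\sigma_2$ is positive by evaluating at a point where the curvature is forced to be definite. Since $M$ is compact, the function $|F|^2$ attains its maximum at some $p_0\in M$. At $p_0$ the immersed hypersurface lies locally inside the closed ball of radius $R=|F(p_0)|>0$ and touches its boundary sphere. A standard second-derivative comparison then shows that, with respect to a local normal (the outward radial one, in our convention $D_X\nu=AX$), all principal curvatures at $p_0$ satisfy $\lambda_i(p_0)\ge 1/R>0$. Consequently $\sigma_2(p_0)\ge \binom n2 R^{-2}>0$. Because $\sigma_2$ is constant, we get $\sigma_2>0$ on all of $M$.

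Next I obtain $H\neq0$ and the trivialization of the normal bundle. The Gauss identity \eqref{eq:gauss-scalar} gives
\[
H^2=|A|^2+2\sigma_2\ge 2\sigma_2>0
\]
for any local choice of normal. Hence $H$ never vanishes. Switching the normal changes $H$ to $-H$, so at each point exactly one of the two local unit normals satisfies $H>0$. These choices agree on overlaps, so they patch together to a global unit normal $\nu$. This trivializes the normal bundle, with $H>0$ everywhere; connectedness is not needed here, though it is available.

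Finally, positivity of $P_1=H\Id-A$ is the main point. Its eigenvalues are $H-\lambda_i=\sum_{j\ne i}\lambda_j$, so I must show that each of these is positive. Fix $i$ and write $S_i=\sum_{j\ne i}\lambda_j$, so that $H=\lambda_i+S_i$. Then
\[
\sigma_2=\lambda_i S_i+\sigma_2^{(i)},
\]
where $\sigma_2^{(i)}$ is the second elementary symmetric function of the remaining $n-1$ curvatures. Newton's inequality gives
\[
\sigma_2^{(i)}\le \frac{n-2}{2(n-1)}S_i^2\le \tfrac12 S_i^2 .
\]
Hence $0<\sigma_2\le \lambda_iS_i+\tfrac12S_i^2$, which says that $S_i$ is never zero on $M$. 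Since $M$ is connected and $S_i$ is continuous, it has a fixed sign on $M$.

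To handle this cleanly without ordering issues, I work with the smallest eigenvalue $\mu_{\min}=H-\lambda_{\max}$, which is continuous. By the same inequality it never vanishes, so it has a constant sign on connected $M$. To fix that sign, I note that $\sum_i S_i=(n-1)H>0$, so at least one $S_i$ is positive at every point. I also evaluate at $p_0$, where all $\lambda_j>0$ for the radial normal. That normal has $H>0$, so it coincides with $\nu$ there, and therefore $S_i>0$ for all $i$ at $p_0$. So $\mu_{\min}>0$ at $p_0$, hence everywhere, and $P_1$ is positive definite.

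The step I expect to need the most care is precisely this non-vanishing and sign-propagation argument for $H-\lambda_i$: one must derive $S_i\ne0$ from $\sigma_2>0$ (as above, via $\sigma_2\le\lambda_iS_i+\tfrac12S_i^2$), and then transport the sign using continuity of the extreme eigenvalue together with the point $p_0$.
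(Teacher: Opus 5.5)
Your proof is correct, and your first two steps match the paper's. The paper maximizes $\rho=\frac12|F-a|^2$ for some $a\notin F(M)$ and reads off $\langle AX,X\rangle\ge r^{-1}|X|^2$ from $\nabla^2\rho\le0$; your choice $a=0$ is fine, since the maximum of $|F|^2$ is positive for an immersion. The paper builds the global normal from the mean curvature vector $\mathcal H=H_\alpha\nu_\alpha$, which is the same as your patching of the normal with $H>0$.

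You genuinely diverge on the positivity of $P_1$. The paper does it pointwise in one line: $H^2=|A|^2+2\sigma_2>|A|^2\ge\lambda_i^2$, so $H>|\lambda_i|\ge\lambda_i$. You instead apply Newton's inequality to the other $n-1$ curvatures to get $0<\sigma_2\le\lambda_iS_i+\tfrac12S_i^2$, which gives only $S_i\neq0$. You then fix the sign globally using continuity of $H-\lambda_{\max}$, connectedness of $M$, and the anchor point $p_0$. This works, but it uses connectedness and continuity of ordered eigenvalues, neither of which the paper's argument needs.

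Your own inequality already settles the sign pointwise, so the propagation step you flagged as delicate is unnecessary. If $S_i<0$, then $\lambda_i=H-S_i>-S_i>0$. Hence $\lambda_i+\tfrac12S_i>-\tfrac12S_i>0$ and $S_i(\lambda_i+\tfrac12S_i)<0$, which contradicts $\sigma_2>0$.
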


\begin{proof}
Choose $a\in\R^{n+1}\setminus F(M)$ and let
\[
 \rho=\frac12|F-a|^2.
\]
At a maximum point $p$ of $\rho$, the vector $F(p)-a$ is normal to
$M$.  Since $a\notin F(M)$, we may choose a local unit normal near
$p$ so that
\[
 F(p)-a=r\nu(p),\qquad r=|F(p)-a|>0.
\]
At $p$, for every $X\in T_pM$, the Gauss formula gives
\[
 \nabla^2\rho(X,X)=|X|^2-r\langle AX,X\rangle.
\]
Since $\nabla^2\rho\le0$ at $p$,
\[
 \langle AX,X\rangle\ge r^{-1}|X|^2.
\]
Thus all principal curvatures at $p$ are positive, and consequently
\[
 \sigma_2=\sigma_2(p)>0,
\]
where $n\ge2$ is used in the last implication.

By \eqref{eq:gauss-scalar},
\begin{equation}\label{eq:H-nonzero}
 H^2=|A|^2+2\sigma_2>0.
\end{equation}
At this stage $H$ is only locally defined, but $H^2$ is globally
well defined: reversing a local unit normal sends $A$ to $-A$ and
$H$ to $-H$, while $|A|^2$ and $\sigma_2$ are unchanged.
To make the orientability argument explicit, take local unit normals
$\nu_\alpha$ and write $H_\alpha=\tr A_{\nu_\alpha}$.  On an overlap
where $\nu_\beta=-\nu_\alpha$, one has
$H_\beta=-H_\alpha$.  Hence the local fields
$H_\alpha\nu_\alpha$ patch together to a global normal field
$\mathcal H$.  By \eqref{eq:H-nonzero}, $\mathcal H$ never vanishes.
Thus
\[
 \nu=\frac{\mathcal H}{|\mathcal H|}
\]
is a global unit normal, and the corresponding scalar mean curvature
is $H=|\mathcal H|>0$.

Finally, for every principal curvature $\lambda_i$,
\[
 H^2=|A|^2+2\sigma_2>|A|^2\ge\lambda_i^2.
\]
Since $H>0$, it follows that $H>|\lambda_i|$, and hence every eigenvalue $H-\lambda_i$ of $P_1$ is positive.
\end{proof}

\begin{remark}\label[remark]{rem:n-minus-one-convex}
If $\lambda_1\le\cdots\le\lambda_n$, positivity of $P_1$ gives
\[
 \lambda_1+\cdots+\lambda_{n-1}=H-\lambda_n>0.
\]
Thus every closed constant-scalar-curvature immersion is strictly
$(n-1)$-convex in this sense.  Precisely when $n=3$, this is strict
two-convexity.
\end{remark}

The next identity is the first Hsiung--Minkowski formula
\cite{Hsiung1954,MontielRos1991}; we include its short derivation to fix
the normalizations.

\begin{lemma}\cite{Hsiung1954,MontielRos1991}\label[lemma]{lem:minkowski}
Under the hypotheses and orientation of \cref{lem:positivity}, fix $a\in\R^{n+1}$ and define
\[
 u=\langle F-a,\nu\rangle.
\]
Then
\begin{equation}\label{eq:minkowski}
 (n-1)\int_M H\,d\mu
 =2\sigma_2\int_M u\,d\mu.
\end{equation}
In particular,
\begin{equation}\label{eq:u-positive}
 \int_M u\,d\mu>0.
\end{equation}
\end{lemma}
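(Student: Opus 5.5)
The plan is to derive \eqref{eq:minkowski} by computing the divergence of a tangential vector field built from the position vector and $P_1$. Set $x=F-a$, and let $x^\top$ be its tangential part, so that $x=x^\top+u\nu$. Define the tangent vector field $Y=P_1(x^\top)$ on $M$. This is globally well defined, because the normal $\nu$ is global by \cref{lem:positivity}, and hence so are $A$, $P_1$ and $u$.

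The first step is to compute $\nabla_X x^\top$. We have $D_Xx=X$. Projecting tangentially and using $D_X(u\nu)=(Xu)\nu+uAX$ gives
\[
 \nabla_Xx^\top=X-uAX .
\]
This is the standard identity; the sign follows from the convention $D_X\nu=AX$ fixed in \cref{sec:preliminaries}. As a check, on the unit sphere with the outward normal we have $x^\top=0$, $u=1$ and $A=\Id$, which is consistent.

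The second step uses $\diver_M P_1=0$ from \eqref{eq:P1-divergence} together with the symmetry of $P_1$:
\[
 \diver_M Y=\tr\bigl(P_1\circ\nabla x^\top\bigr)=\tr P_1-u\,\tr(P_1A)=(n-1)H-2u\sigma_2,
\]
where the last equality is \eqref{eq:P1-traces}. We then integrate over the closed manifold $M$ and apply the divergence theorem. Since $\sigma_2$ is constant, it can be pulled out of the integral, and this yields \eqref{eq:minkowski}.

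For \eqref{eq:u-positive}, we have $n\ge2$, $H>0$ and $\sigma_2>0$ by \cref{lem:positivity}. The left side of \eqref{eq:minkowski} is therefore positive, and dividing by $2\sigma_2>0$ gives $\int_M u\,d\mu>0$.

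The only delicate point is the sign bookkeeping in $\nabla_Xx^\top=X-uAX$ under the paper's convention, together with the symmetry of $P_1$ used to write $\diver_M(P_1x^\top)=(\diver_M P_1)(x^\top)+\tr(P_1\nabla x^\top)$. Beyond that, the argument is routine.
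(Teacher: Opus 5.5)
Your proposal is correct and follows essentially the same route as the paper's proof. The paper likewise takes the divergence of $P_1$ applied to the tangential part of $F-a$, uses $\nabla_X Z=X-uAX$ together with \eqref{eq:P1-divergence} and \eqref{eq:P1-traces}, integrates over $M$, and reads off positivity from $H>0$ and $\sigma_2>0$.
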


\begin{proof}
Let
\[
 Z=(F-a)^T=F-a-u\nu.
\]
Moreover, for $X\in TM$,
\[
 \nabla_X Z=X-uAX.
\]
Using \eqref{eq:P1-divergence} and \eqref{eq:P1-traces}, we obtain
\begin{align*}
 \diver_M(P_1Z)
 &=\tr(P_1)-u\tr(P_1A)\\
 &=(n-1)H-u(H^2-|A|^2)\\
 &=(n-1)H-2\sigma_2u.
\end{align*}
Integrating over the closed manifold $M$ proves \eqref{eq:minkowski}.
The positivity in \eqref{eq:u-positive} follows from $H>0$ and
$\sigma_2>0$.
\end{proof}

\section{Ros's rigidity argument on an immersed flat filling}
\label{sec:flat-filling}

This section gives the $r=2$ specialization of the argument in
\cite[Theorem~2 and the Remark on p.~452]{Ros1987}.  We include the
details because the boundary orientation and the trace convention for
the mean curvature are important below.

Throughout this section, assume that
$F:M^n\looparrowright\R^{n+1}$ satisfies the hypotheses of
\cref{thm:flat-filling}.  Let $(\Omega,G,\Phi)$ be an immersed filling
and identify $\partial\Omega$ with $M$ using $\Phi$.  Thus
\[
 G:\Omega\looparrowright\R^{n+1}
\]
restricts to $F$ on the boundary.  Equip $\Omega$ with
\[
 \bar g=G^*\langle\cdot,\cdot\rangle.
\]
On $\Omega^\circ$, the map $G$ is an immersion between manifolds of the
same dimension and hence a local diffeomorphism.  By the definition of
$\bar g$, it is also a local isometry to Euclidean space.  Consequently,
$\overline{\operatorname{Rm}}=0$ in the interior and, by smoothness up
to the boundary,
\begin{equation}\label{eq:flat}
 \overline{\operatorname{Rm}}\equiv0.
\end{equation}
The boundary metric and measure induced by $\bar g$ agree with those
induced by $F$.  Let $\eta$ denote the outward unit normal to
$M=\partial\Omega$ with respect to $\bar g$, and write
\[
 V=\Vol_{\bar g}(\Omega).
\]
Fix $a\in\R^{n+1}$ and set
\[
 u=\langle F-a,\nu\rangle.
\]

We first spell out the boundary-orientation compatibility implicit in
Ros's remark.  This ensures that the positive mean curvature selected
in \cref{lem:positivity} is exactly the boundary mean curvature required
in Reilly's formula.

\begin{lemma}\label[lemma]{lem:normal-compatibility}
Along $M$ one has
\[
 dG(\eta)=\nu.
\]
Moreover,
\begin{equation}\label{eq:support-volume}
 \int_M u\,d\mu=(n+1)V.
\end{equation}
If $\bar A X=\bar\nabla_X\eta$ is the boundary shape operator of
$(\Omega,\bar g)$, then, under the identification by $dF=dG|_{TM}$,
\[
 \bar A=A.
\]
In particular, the outward boundary mean curvature equals $H>0$.
\end{lemma}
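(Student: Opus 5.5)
Since $G$ is an immersion and $dG|_{TM}=dF$, at each $p\in M$ the map $dG_p:T_p\Omega\to\R^{n+1}$ is a linear isometry from $(T_p\Omega,\bar g)$ that carries $T_pM$ onto $dF(T_pM)$. Hence it sends the $\bar g$-unit normal $\eta(p)$ to a Euclidean unit normal of $F$ at $p$, namely $\pm\nu(p)$. The function $\epsilon:=\langle dG(\eta),\nu\rangle\in\{\pm1\}$ is continuous on the connected manifold $M$, so it is constant. It therefore suffices to fix its sign at one point.

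To determine the sign, I would choose $a\notin F(M)$ and take the point $p$ of \cref{lem:positivity}, where $|F-a|$ attains its maximum over $M$. There, $F(p)-a=r\nu(p)$ with $r>0$ and $A>0$. I would then argue near $p$ using $G$. The curve $t\mapsto G(\exp^{\bar g}_p(-t\eta))$, for small $t>0$, enters the interior and satisfies $\frac{d}{dt}G=-dG(\eta)=-\epsilon\nu$. Now consider $\phi=\frac12|G-a|^2$ on $\Omega$. Its interior Euclidean Hessian is $\bar\nabla^2\phi=\bar g$, which is positive definite, so $\phi$ has no interior maximum. Hence $\max_\Omega\phi$ is attained on $M$, and $p$ is a global maximum point of $\phi$ on $\Omega$. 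At a boundary maximum, the outward derivative satisfies $\eta\phi\ge0$. We have $\eta\phi(p)=\langle G(p)-a,dG(\eta)\rangle=\epsilon r$, which forces $\epsilon=1$. This uses only compactness of $\Omega$ and the flat structure; $G$ need not be injective. So $dG(\eta)=\nu$.

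Next I would prove $\bar A=A$. For $X,Y\in TM$, the map $dG$ intertwines $\bar\nabla$ with the Euclidean $D$ because $G$ is a local isometry (on the boundary, by continuity). Thus $dG(\bar\nabla_X\eta)=D_X(dG\,\eta)=D_X\nu=AX$. Since $\bar\nabla_X\eta$ is tangent to $M$, this gives $\bar A=A$ under the identification $dF=dG|_{TM}$. It follows that $\bar H=H>0$.

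For the volume identity, I would apply the divergence theorem on $(\Omega,\bar g)$ to the vector field $W$ that is $dG$-related to the position field, i.e. $W=\bar\nabla\phi$ with $\phi=\frac12|G-a|^2$. Since $\bar\Delta\phi=n+1$ (trace of $\bar g$), we get $(n+1)V=\int_M\langle\bar\nabla\phi,\eta\rangle\,d\mu=\int_M\langle G-a,dG(\eta)\rangle\,d\mu=\int_M\langle F-a,\nu\rangle\,d\mu=\int_M u\,d\mu$. Here we use that the boundary measure of $\bar g$ agrees with $d\mu$. For general $a$, including $a\in F(M)$, the identity follows from this computation directly; it does not depend on the choice of $a$.

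The main obstacle is the sign determination $\epsilon=1$. The argument above reduces it to a boundary-maximum principle for $\phi$ on the compact flat manifold. The point to check carefully is that the boundary maximum gives $\eta\phi\ge0$ with respect to the outward normal, which holds because $\phi(\exp(-t\eta))\le\phi(p)$ for small $t>0$. A secondary consistency check is that \eqref{eq:u-positive} then agrees with $V>0$.
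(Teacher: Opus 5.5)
Your proof is correct. Your computations of $\bar A=A$ and of $(n+1)V=\int_M u\,d\mu$ agree with the paper's: your $\bar\nabla\phi$ is exactly the paper's field $Y$ with $dG(Y)=G-a$.

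The real difference is how you fix the sign $\varepsilon$ in $dG(\eta)=\varepsilon\nu$.

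\emph{Paper's route.} The paper uses no maximum principle. It runs the divergence computation first and gets $(n+1)V=\varepsilon\int_M u\,d\mu$. It then reads off $\varepsilon=1$ from $V>0$ and the Minkowski consequence $\int_M u\,d\mu>0$ of \cref{lem:minkowski}. So the sign and the volume identity come out of a single computation.

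\emph{Your route.} You use that $\phi=\frac12|G-a|^2$ has $\bar\nabla^2\phi=\bar g>0$. Hence its maximum over $\Omega$ is attained at the farthest boundary point $p$, where $\eta\phi(p)=\varepsilon r\ge0$. One step is left implicit. At $p$, the normal $(F(p)-a)/r$ makes all principal curvatures positive, so it has $H>0$, so it is the global $\nu$ of \cref{lem:positivity}. That is what justifies writing $F(p)-a=r\nu(p)$ with the global normal rather than a local one; it deserves one line.

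\emph{Comparison.} Your argument is slightly longer but more general. It shows that for any immersed filling of any closed immersed hypersurface, $dG(\eta)$ is the coorientation with respect to which the hypersurface is convex at a point farthest from an exterior point. The sign mechanism does not use the Minkowski formula or the constancy of $\sigma_2$. The paper's route is shorter because it reuses \cref{lem:minkowski}, which it needs anyway for \eqref{eq:exact-H-over-sigma2}.
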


\begin{proof}
Both $dG(\eta)$ and $\nu$ are unit normal fields along the connected
immersed hypersurface $F$.  Hence
\[
 dG(\eta)=\varepsilon\nu
\]
for a constant $\varepsilon\in\{-1,1\}$.

Let $\mathcal R_a(x)=x-a$ be the Euclidean radial vector field.  Since
$dG$ is an isomorphism, there is a unique smooth vector field $Y$ on
$\Omega$ defined by
\[
 dG(Y)=\mathcal R_a\circ G=G-a.
\]
Since $G$ is a local isometry, for every vector field $X$ on $\Omega$,
\[
 dG(\bar\nabla_XY)
 =D_{dG(X)}\mathcal R_a
 =dG(X).
\]
Thus
\[
 \bar\nabla Y=\Id,
 \qquad
 \diver_{\bar g}Y=n+1.
\]
Moreover, $G^*(dx^1\wedge\cdots\wedge dx^{n+1})$ is nowhere zero, so
$\Omega$ is orientable.  The divergence theorem gives
\begin{align*}
 (n+1)V
 &=\int_M\langle Y,\eta\rangle_{\bar g}\,d\mu\\
 &=\int_M\langle G-a,dG(\eta)\rangle\,d\mu\\
 &=\varepsilon\int_M u\,d\mu.
\end{align*}
By \eqref{eq:u-positive}, $\int_M u\,d\mu>0$.  Since $V>0$, we must
have $\varepsilon=1$, and \eqref{eq:support-volume} follows.

For $X\in TM$, compatibility of the Levi-Civita connections under the
local isometry $G$ now gives
\begin{align*}
 dG(\bar A X)
 &=dG(\bar\nabla_X\eta)\\
 &=D_{dF(X)}dG(\eta)\\
 &=D_{dF(X)}\nu
 =dF(AX).
\end{align*}
Since $dG=dF$ on $TM$, this proves $\bar A=A$, and hence the outward
boundary mean curvature is $H>0$.
\end{proof}

Combining \cref{lem:minkowski,lem:normal-compatibility} yields the exact identity
\begin{equation}\label{eq:exact-H-over-sigma2}
 \int_M\frac{H}{\sigma_2}\,d\mu
 =\frac{2(n+1)}{n-1}V.
\end{equation}

We next record the flat specialization of Ros's Reilly-formula version
of the classical Heintze--Karcher inequality
\cite[Theorem~1, pp.~449--450]{Ros1987}; see also
\cite{HeintzeKarcher1978}.  Ros uses the normalized mean curvature
$H_1=H/n$, whereas $H$ below denotes the trace mean curvature.  We
include the short proof to make this normalization and all boundary
signs explicit.

\begin{proposition}[Heintze--Karcher--Ros inequality \cite{HeintzeKarcher1978,Ros1987}]\label[proposition]{prop:HK}
Let $(\Omega^{n+1},\bar g)$ be a compact flat Riemannian manifold with
smooth boundary $M$.  Let $\eta$ be the outward unit normal, set
$\bar A X=\bar\nabla_X\eta$, and let $H=\tr_M\bar A$.  If $H>0$, then
\begin{equation}\label{eq:HK}
 \int_M\frac1H\,d\mu
 \ge \frac{n+1}{n}\Vol_{\bar g}(\Omega).
\end{equation}
\end{proposition}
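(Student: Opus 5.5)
We follow Ros's argument: solve a Dirichlet problem, apply the reduced Reilly formula \eqref{eq:reilly-preliminary}, and use Cauchy--Schwarz on the boundary. Let $f$ be the solution of
\[
 \bar\Delta f=1\ \text{in }\Omega,\qquad f|_M=0.
\]
It exists and is smooth up to the boundary by standard elliptic theory on the compact manifold with boundary $(\Omega,\bar g)$. Flatness gives $\Ric_{\bar g}=0$, so \eqref{eq:reilly-preliminary} reads
\[
 \int_\Omega\bigl((\bar\Delta f)^2-|\bar\nabla^2 f|^2\bigr)\dV=\int_M H f_\eta^2\dmu .
\]
The trace Cauchy--Schwarz inequality on the $(n+1)$-dimensional tangent spaces gives $|\bar\nabla^2f|^2\ge (\bar\Delta f)^2/(n+1)$. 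Therefore
\[
 \int_M H f_\eta^2\dmu\le \frac{n}{n+1}\int_\Omega(\bar\Delta f)^2\dV=\frac{n}{n+1}V .
\]

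Next we relate $f_\eta$ to the volume. By the divergence theorem, $\int_M f_\eta\dmu=\int_\Omega\bar\Delta f\dV=V$. Since $H>0$, Cauchy--Schwarz on $M$ gives
\[
 V^2=\Bigl(\int_M f_\eta\dmu\Bigr)^2\le\int_M\frac1H\dmu\int_M Hf_\eta^2\dmu\le \int_M\frac1H\dmu\cdot\frac{n}{n+1}V .
\]
Dividing by $V>0$ yields \eqref{eq:HK}.

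The only real care needed is with signs and normalizations, which \eqref{eq:reilly-preliminary} already fixes. Its boundary term carries the outward mean curvature $H$, which is positive by hypothesis, so no absolute values are lost. With the normalization $\bar\Delta f=1$, the relevant quantity is $\int_M f_\eta\dmu=+V$. The sign of $f_\eta$ plays no role, because we only use its square and its integral.

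For the later rigidity step we also record the equality case. Equality forces $\bar\nabla^2f=\frac{1}{n+1}\bar g$ on $\Omega$. It also forces $f_\eta$ to be proportional to $1/H$ on $M$, and since $f$ is constant on $M$, this makes $M$ totally umbilic.
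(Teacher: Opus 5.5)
Your proof is correct and follows the paper's argument step for step: the Dirichlet problem $\bar\Delta f=1$, $f|_M=0$, the flat Reilly identity, the trace inequality $|\bar\nabla^2 f|^2\ge\frac{1}{n+1}(\bar\Delta f)^2$, the identity $\int_M f_\eta\,d\mu=V$, and the weighted Cauchy--Schwarz inequality. Your closing equality-case remark is not needed, since the paper obtains umbilicity from the integrated Newton defect identity instead. If you keep the remark, note that umbilicity comes from restricting $\bar\nabla^2 f=\frac{1}{n+1}\bar g$ to $TM$ and using $f|_M=0$, which gives $f_\eta\bar A=\frac{1}{n+1}\Id$; it does not come from $f_\eta\propto 1/H$.
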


\begin{proof}
We use $\bar\Delta=\diver_{\bar g}\bar\nabla$.  Standard Dirichlet
theory gives a unique smooth solution of
\begin{equation}\label{eq:dirichlet}
 \begin{cases}
  \bar\Delta f=1&\text{in }\Omega,\\
  f=0&\text{on }M.
 \end{cases}
\end{equation}
Since $\bar g$ is flat and $f$ vanishes on the boundary,
\eqref{eq:reilly-preliminary} reduces to
\begin{equation}\label{eq:reilly-reduced}
 \int_M Hf_\eta^2\,d\mu
 =\int_\Omega\bigl((\bar\Delta f)^2-|\bar\nabla^2f|^2\bigr)\,dV.
\end{equation}
The trace inequality in dimension $n+1$ gives
\[
 |\bar\nabla^2f|^2\ge\frac{(\bar\Delta f)^2}{n+1}=\frac{1}{n+1}.
\]
Consequently,
\begin{equation}\label{eq:reilly-upper}
 \int_M Hf_\eta^2\,d\mu
 \le\frac{n}{n+1}V.
\end{equation}
On the other hand, the divergence theorem and \eqref{eq:dirichlet} imply
\[
 \int_M f_\eta\,d\mu=V.
\]
Weighted Cauchy--Schwarz and \eqref{eq:reilly-upper} therefore yield
\begin{align*}
 V^2
 &=\left(\int_M f_\eta\,d\mu\right)^2\\
 &\le \left(\int_M Hf_\eta^2\,d\mu\right)
       \left(\int_M\frac1H\,d\mu\right)\\
 &\le \frac{n}{n+1}V\int_M\frac1H\,d\mu.
\end{align*}
Dividing by $V>0$ proves \eqref{eq:HK}.
\end{proof}

For completeness, we now present Ros's argument in the case \(r=2\), writing the Newton inequality in the exact defect form needed to identify the equality case.

\begin{proof}[\textbf{Proof of \cref{thm:flat-filling}}]
By \cref{lem:positivity}, $\sigma_2>0$ and the normal is chosen so that
$H>0$.  The Gauss equation gives
\[
 2\sigma_2=H^2-|A|^2.
\]
A direct calculation gives the exact Newton-defect identity
\begin{equation}\label{eq:newton-defect}
 \frac{H}{\sigma_2}-\frac{2n}{(n-1)H}
 =\frac{n|\Acirc|^2}{(n-1)\sigma_2H}\ge0.
\end{equation}
Indeed,
\[
 (n-1)H^2-2n\sigma_2
 =n|A|^2-H^2
 =n|\Acirc|^2.
\]
Using \eqref{eq:exact-H-over-sigma2}, \eqref{eq:newton-defect}, and
\cref{prop:HK}, we obtain
\begin{align}
 \frac{2(n+1)}{n-1}V
 &=\int_M\frac{H}{\sigma_2}\,d\mu \notag\\
 &\ge\frac{2n}{n-1}\int_M\frac1H\,d\mu \notag\\
 &\ge\frac{2n}{n-1}\frac{n+1}{n}V
 =\frac{2(n+1)}{n-1}V.
 \label{eq:equality-chain}
\end{align}
Thus equality holds throughout.  In particular, integrating \eqref{eq:newton-defect} gives
\[
 \int_M\frac{n|\Acirc|^2}{(n-1)\sigma_2H}\,d\mu=0.
\]
Since $\sigma_2H>0$, we conclude that
\[
 \Acirc\equiv0.
\]
Hence $A=\kappa\Id$, where $\kappa=H/n>0$.  Since
\[
 \sigma_2=\binom{n}{2}\kappa^2
\]
is constant, $\kappa$ is constant.  Therefore
\[
 D_X\left(F-\frac1\kappa\nu\right)
 =X-\frac1\kappa AX=0
\]
for every $X\in TM$.  There is a point $q\in\R^{n+1}$ such that
\[
 F-\frac1\kappa\nu=q,
\]
and hence
\[
 |F-q|=\frac1\kappa.
\]
Thus $F(M)$ lies in the round sphere $S^n(q,\kappa^{-1})$.  The map
\[
 F:M\longrightarrow S^n(q,\kappa^{-1})
\]
is a local diffeomorphism.  Its image is open; it is also compact and
hence closed in the connected sphere.  Therefore the image is the
whole sphere.  Since $M$ is compact, $F$ is proper, so this surjective
local diffeomorphism is a covering map.  For $n\ge2$, the target sphere
is simply connected.  Because $M$ is connected, the covering has one
sheet.  Thus $F$ is a diffeomorphism onto the round sphere.
\end{proof}

\section{Two-convex hypersurfaces and the three-dimensional case}
\label{sec:two-convex}

We now invoke the following consequence of the Huisken--Sinestrari mean curvature flow with surgery, which provides precisely the immersed extension required in Ros's remark.

\begin{theorem}[Huisken--Sinestrari \cite{HuiskenSinestrari2009}]\label{thm:HS}
Let $n\ge3$.  Every smooth closed two-convex immersion
\[
 F:M^n\looparrowright\R^{n+1}
\]
admits an immersed filling by a compact handlebody.
\end{theorem}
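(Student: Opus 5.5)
The statement is a repackaging of \cite[Corollary~1.2 and pp.~218--219]{HuiskenSinestrari2009}. I would prove it by running their mean curvature flow with surgery from $F$ and reading the filling off the swept-out region, working backwards in time.

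\textbf{Step 1: pass from weak to strict two-convexity.} Two-convexity in the weak sense $\lambda_1+\lambda_2\ge0$ gives $H\ge0$ for the chosen normal. Run the smooth mean curvature flow $F_t$ for a short time $[0,\varepsilon]$. By the maximum principle for the evolution of the smallest two-eigenvalue sum, two-convexity is preserved. By the strong maximum principle it becomes strict for $t>0$ unless $\lambda_1+\lambda_2\equiv0$ somewhere on an open set. The latter is excluded: as in the proof of \cref{lem:positivity}, at a point of maximal distance from a fixed external point all principal curvatures are positive, so $\lambda_1+\lambda_2>0$ there. Hence $F_\varepsilon$ is strictly two-convex with $H>0$, the setting in which the Huisken--Sinestrari surgery algorithm applies.

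\textbf{Step 2: build the collar.} On $(0,\varepsilon]$ the map $(x,t)\mapsto F_t(x)$ has normal velocity $-H\nu\neq0$. So it is an immersion of $M\times(0,\varepsilon]$ into $\R^{n+1}$, moving towards the mean-convex side. I would check smoothness and the immersion property up to $t=0$. Where $H(\cdot,0)=0$ this fails, so I would replace $M\times[0,\varepsilon]$ by a reparametrized collar built from $F_0$ and a small inward push along $\nu$, weighted to agree with the flow for $t\ge\varepsilon/2$. Alternatively I would cite their statement directly, which already handles this.

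\textbf{Step 3: run surgery and reverse it.} Apply mean curvature flow with surgery from $F_\varepsilon$. By \cite{HuiskenSinestrari2009} it terminates after finitely many surgeries, and every discarded component is diffeomorphic to $S^n$ or to $S^1\times S^{n-1}$, bounding an immersed convex ball or a solid torus of the standard neck type. Between surgery times the flow again sweeps out immersed collars $M_i\times[t_i,t_{i+1}]$. At each surgery a thin cylindrical neck $S^{n-1}\times[-1,1]$ is replaced by two convex caps. Reversing this amounts to attaching a $1$-handle $D^n\times[-1,1]$, immersed via the solid neck region, which is a normal-exponential image of the axis segment. Gluing the collars, the $1$-handles, and the final balls or solid tori yields a compact manifold $\Omega$ that is a handlebody, together with an immersion $G$ extending $F$.

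\textbf{Main obstacle.} The hard step is gluing at surgery times: corners along $\partial(\text{caps})$ must be smoothed so that $G$ remains an immersion. The mean-convex normal on each piece must also match the outward direction of the attached piece. I would handle this by using the explicit standard-surgery neck geometry from \cite{HuiskenSinestrari2009}. There the caps lie inside the solid neck, so a slight thickening and a smooth interpolation in the time coordinate gives a smooth immersed handle attachment.
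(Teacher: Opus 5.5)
Your proposal follows the paper's route: the paper gives no independent argument and simply invokes the immersed case of \cite[Corollary~1.2 and the last paragraph of its proof]{HuiskenSinestrari2009}, and your Steps~1--3 unpack that mean-curvature-flow-with-surgery argument (weak-to-strict two-convexity, flow collars between surgeries, reversed surgeries as $1$-handles, discarded components as $0$-handles or solid tori). Two small corrections to your sketch: on connected $M$ the strong maximum principle gives $\lambda_1+\lambda_2>0$ for all $t>0$ unless $\lambda_1+\lambda_2\equiv0$ on all of $M$, not merely on some open set, and it is this global alternative that your farthest-point observation excludes; also, the discarded neck-type components diffeomorphic to $S^n$ bound immersed balls that need not be convex.
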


More precisely, this extension statement is contained in the immersed
case of \cite[Corollary~1.2 and the last paragraph of its
proof]{HuiskenSinestrari2009}: one obtains a handlebody
$\Omega^{n+1}$, a diffeomorphism $\Phi:\partial\Omega\to M$, and an
immersion $G:\Omega\looparrowright\R^{n+1}$ such that
$G|_{\partial\Omega}=F\circ\Phi$.  Thus the result supplies the
genuine codimension-zero extension required in \cref{def:immersed-flat-filling},
not merely a diffeomorphism classification of the boundary.

\begin{proof}[Proof of \cref{thm:two-convex}]
By \cref{thm:HS}, the immersion admits an immersed filling.  The conclusion follows immediately from \cref{thm:flat-filling}.
\end{proof}

In dimension three, the implication from positive mean curvature and
positive scalar curvature to two-convexity is already noted in
\cite[Lemma~2.3 and the paragraph following it]{HuiskenSinestrari2009}.
For completeness, we record the short argument in our sign convention.

\begin{lemma}\label[lemma]{lem:3d-two-convex}
Let $F:M^3\looparrowright\R^4$ be a closed immersed
hypersurface with constant scalar curvature.  Choose the global normal
so that $H>0$, and order the principal curvatures by
\[
 \lambda_1\le\lambda_2\le\lambda_3.
\]
Then
\[
 \lambda_1+\lambda_2>0
\]
everywhere.
\end{lemma}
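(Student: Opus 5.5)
The plan is to read the inequality off from \cref{lem:positivity} together with \cref{rem:n-minus-one-convex}; no new analysis is required. By \cref{lem:positivity}, $\sigma_2>0$ is constant, the global unit normal can be chosen with $H>0$, and $P_1=H\Id-A$ is positive definite at every point of $M$.

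The key step is to specialize this to $n=3$. The eigenvalues of $P_1$ are $H-\lambda_i$, and for the largest principal curvature $\lambda_3$ we have
\[
 H-\lambda_3=\lambda_1+\lambda_2.
\]
Positivity of $P_1$ therefore gives $\lambda_1+\lambda_2>0$ everywhere, which is exactly the case $n=3$ of \cref{rem:n-minus-one-convex}. The underlying inequality comes from the proof of \cref{lem:positivity}: $H^2=|A|^2+2\sigma_2>\lambda_3^2$, and since $H>0$ this yields $H>|\lambda_3|\ge\lambda_3$.

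As an independent check in the convention of Huisken--Sinestrari, I would also argue directly from the Gauss equation. Suppose $\lambda_1+\lambda_2\le0$ at some point. Since $H>0$, this forces $\lambda_3>0$, and we can write
\[
 \sigma_2=\lambda_1\lambda_2+\lambda_3(\lambda_1+\lambda_2).
\]
The second term is $\le0$. If $\lambda_1\lambda_2>0$, then both $\lambda_1,\lambda_2<0$, and
\[
 \sigma_2=\lambda_1\lambda_2+\lambda_3(\lambda_1+\lambda_2)\le\lambda_1\lambda_2+\lambda_2(\lambda_1+\lambda_2)<0,
\]
using $\lambda_3\ge\lambda_2$ and $\lambda_1+\lambda_2<0$. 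Here the last expression is $2\lambda_1\lambda_2+\lambda_2^2$, which should be regrouped as $-\lambda_2(\ldots)$; the $P_1$ argument is cleaner and I would present that one. In either version, $\sigma_2\le 0$ contradicts $\sigma_2>0$.

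There is no real obstacle here. The only care needed is to use the normal chosen in \cref{lem:positivity}, so that the ordering of the $\lambda_i$ and the sign of $H$ are consistent, and to note that the inequality is strict because $\sigma_2>0$ strictly.
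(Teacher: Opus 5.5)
Your main argument is exactly the paper's proof: positivity of $P_1$ from \cref{lem:positivity}, read off on the eigenvalue $H-\lambda_3=\lambda_1+\lambda_2$. Drop the ``independent check'', though. In the case $\lambda_1\le\lambda_2<0$ your upper bound $2\lambda_1\lambda_2+\lambda_2^2$ is positive, so $\sigma_2<0$ does not follow. If you want that route, use $\lambda_3>-(\lambda_1+\lambda_2)$ (from $H>0$), which gives $\sigma_2<-(\lambda_1^2+\lambda_1\lambda_2+\lambda_2^2)\le0$ whenever $\lambda_1+\lambda_2<0$; when $\lambda_1+\lambda_2=0$ one has $\sigma_2=-\lambda_1^2\le0$ directly.
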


\begin{proof}
By \cref{lem:positivity}, $P_1=H\Id-A$ is positive definite.  Its
eigenvalue in a principal direction corresponding to $\lambda_3$ is
\[
 H-\lambda_3=\lambda_1+\lambda_2.
\]
This eigenvalue is positive, which proves the claim.
\end{proof}

\begin{proof}[\textbf{Proof of \cref{cor:dimension-three}}]
By \cref{lem:3d-two-convex}, the immersion is strictly two-convex.  Applying \cref{thm:two-convex} completes the proof.
\end{proof}

In dimension three the equality chain takes the particularly transparent form
\[
 4V
 =\int_M\frac{H}{\sigma_2}\,d\mu
 \ge3\int_M\frac1H\,d\mu
 \ge4V.
\]
Thus the traceless second fundamental form vanishes identically.

\section{Gromov fillings and high-dimensional consequences}
\label{sec:gromov}

We now use Gromov's \emph{eigenvalue-count $k$-convexity}.  In the
convention of this paper, a cooriented immersed hypersurface
$W^{N-1}\looparrowright\R^N$, with coorienting unit normal $\nu$, is
$k$-convex in this sense if at least $k$ eigenvalues of
$A_\nu=D\nu$ are nonnegative at every point.  This is an
eigenvalue-count condition, rather than a condition on a sum of the
smallest principal curvatures.  
Our sign convention agrees with Gromov's normalization: for the round
sphere his second fundamental form is positive, and the concentric
sphere of radius $r$ has principal curvatures
$r^{-1}$ \cite[pp.~15--16]{Gromov1991}.  Hence no
additional sign reversal is made when applying his filling theorem
below.

\begin{theorem}[Gromov's immersed filling theorem \cite{Gromov1991}]\label{thm:Gromov-filling}
Let $N\ge3$, and let
$W^{N-1}\looparrowright\R^N$ be a smooth closed cooriented immersed
hypersurface.  Suppose that $W$ is $k$-convex in the eigenvalue-count sense above and
\[
 k>\frac N2.
\]
Then there exist a compact smooth $N$-manifold $V$ with
$\partial V\simeq W$ and an immersion $\widetilde G:V\looparrowright\R^N$
whose restriction to the boundary is the given immersion.  In
particular, $W$ admits an immersed filling (and, if necessary, closed
components of $V$ may be discarded).
\end{theorem}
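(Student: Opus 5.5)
This theorem is quoted from \cite[p.~23]{Gromov1991} rather than proved here. If I had to reconstruct the argument, I would follow Gromov's strategy: reduce the existence of the filling to a homotopy-theoretic statement, then use the curvature hypothesis to build the manifold $V$ handle by handle.

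\textbf{Step 1: reduction to a bundle problem.} By the Smale--Hirsch--Phillips $h$-principle for codimension-zero immersions of manifolds in which every closed component has nonempty boundary, an immersion $\widetilde G:V\looparrowright\R^N$ extending the given immersion on $\partial V=W$ exists as soon as two things are available. First, a compact $N$-manifold $V$ with $\partial V\cong W$. Second, a bundle isomorphism $TV\cong V\times\R^N$ that restricts on $\partial V$ to the canonical isomorphism
\[
 TW\oplus\R\nu\cong W\times\R^N
\]
coming from $dF$ and the coorientation. So the problem becomes finding such a $V$ together with an extension of the framing.

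\textbf{Step 2: the collar.} Let $\nu$ be the coorientation. I would push $W$ a short distance in the direction $-\nu$ by the normal exponential map; this gives an immersed collar $W\times[0,\varepsilon)$. Along this inward flow, only the directions corresponding to the (at most $N-1-k$) negative principal curvatures can focus. I would construct $V$ by continuing the flow and, at each focal degeneration, attaching a handle. The convexity hypothesis is meant to ensure that every such handle has index at most $N-1-k<N/2-1$, that is, that only the few concave directions can collapse. Equivalently, $V$ should admit a handle decomposition relative to $W$ using only handles of index less than $N/2$. Gromov's version uses a Morse function whose critical points are controlled by the eigenvalue count.

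\textbf{Step 3: extending the framing.} The framing must extend over each handle. Over a handle of index $j$, the obstruction to extending it lies in $\pi_{j-1}(O(N))$, evaluated on the attaching sphere. Because the handles come from the immersed flow in $\R^N$, the framing of each attaching sphere is induced by an ambient immersion, so the obstruction vanishes. Moreover, low index, combined with general position in $\R^N$ for $j<N/2$, makes the resulting immersed cores generic and extendable. Discarding any closed components of $V$ then gives the immersed filling.

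\textbf{Expected main obstacle.} I expect the hard part to be Step 2: turning the pointwise eigenvalue-count condition into a global handle structure of low index. This is exactly where $k>N/2$ enters, and it is the genuinely geometric content of Gromov's theorem. For the present paper, I would simply invoke \cite{Gromov1991} at this point and check only the sign convention, which has already been done above.
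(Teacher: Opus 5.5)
Your fallback, which is to cite \cite[p.~23]{Gromov1991} and check only the sign convention, is exactly what the paper does. The paper proves nothing here; it only indicates Gromov's mechanism: slice $W$ by a generic family of parallel hyperplanes, fill the immersed sections inductively, and use $k>N/2$ to exclude the interior head-on collision. The reconstruction you sketch is a different route, and as written it fails.

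In Step~1, the $h$-principle does not hold relative to the whole boundary of a compact $V$. Turn a handle decomposition of $V$ relative to $\partial V$ upside down: you get an absolute decomposition, which needs a $0$-handle. So relative to $\partial V$ there is always at least one $N$-handle; for example, $D^N$ is a collar of $S^{N-1}$ plus one $N$-handle. The compression argument behind Phillips's theorem needs every handle relative to $W$ to have index $<N$, so it gives nothing across that cap. Already for $N=2$ the principle fails: a figure-eight with one extra curl has rotation number $\pm1$, so for a suitable orientation the framing extends over $D^2$. Yet the curve has regions of winding number $+1$ and $-1$, so it bounds no immersed disk. The same count shows that the ``handle decomposition relative to $W$ using only handles of index less than $N/2$'' in Step~2 exists for no compact $V$. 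If you read it as an absolute decomposition, the index bound becomes plausible, but then the relative $N$-handles are back and Step~1 cannot extend across them.

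Step~2 also has the focusing reversed. With $D_X\nu=AX$, the inward parallels $F-t\nu$ have differential $dF\circ(\Id-tA)$. For $t>0$ this degenerates only at $t=1/\lambda_i$ with $\lambda_i>0$. So it is the convex directions that focus (the round sphere collapses to its center), never the concave ones.

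Step~3 is circular. If the flow itself immersed the handle, there would be nothing to extend. Otherwise the class in $\pi_{j-1}(O(N))$ is not automatically zero. For $2j\le N$ you would kill it by re-choosing the handle's normal framing, since $\pi_{j-1}(SO(N-j))\to\pi_{j-1}(SO(N))$ is onto in that range.

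So the entire geometric content is absent from the sketch: producing $V$ and extending the immersion over the caps, each of which is again a filling problem of the original type. Your proposal stands only as the citation, which is all the paper itself relies on.
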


This is the filling theorem stated in
\cite[p.~23]{Gromov1991}; see pp.~23--24 there for the construction.
Gromov's argument slices the hypersurface by a generic family of
parallel hyperplanes and fills the resulting immersed sections
inductively; the inequality $k>N/2$ rules out the interior head-on
collision that obstructs extension.  For a later treatment of related
mean-convexity and filling questions, see \cite{Gromov2014}.  That
reference is included for context; the eigenvalue-count filling theorem
used here is the 1991 result stated in \cref{thm:Gromov-filling}.

\begin{remark} 
\label{rem:convexity-comparison}
The two filling criteria used in this paper overlap but are not the
same.  If $M^n\looparrowright\R^{n+1}$ is two-convex in the
Huisken--Sinestrari sense, then
\[
 \lambda_1+\lambda_2\ge0
 \quad\Longrightarrow\quad
 \lambda_2\ge0.
\]
Hence at least $n-1$ principal curvatures are nonnegative, so the
hypersurface is $(n-1)$-convex in Gromov's eigenvalue-count sense.  If
$n\ge4$, then
\[
 n-1>\frac{n+1}{2},
\]
and \cref{thm:Gromov-filling} already supplies an immersed filling.
For $n=3$, however,
\[
 n-1=2=\frac{n+1}{2},
\]
which is exactly the borderline excluded by Gromov's strict inequality
$k>N/2$.  Thus the Huisken--Sinestrari filling theorem is essential for
the unrestricted three-dimensional consequence \cref{cor:dimension-three}.
The converse implication fails in general: having many nonnegative
principal curvatures does not force the sum of the two smallest ones to
be nonnegative.
\end{remark}

\begin{proof}[\textbf{Proof of \cref{thm:negative-index}}]
By \cref{lem:positivity}, the normal bundle is trivial and we choose
the global normal so that $H>0$.  Put
\[
 r_0=\left\lfloor\frac{n-2}{2}\right\rfloor,
 \qquad
 k_0=n-r_0=\left\lfloor\frac{n+3}{2}\right\rfloor.
\]
The hypothesis \eqref{eq:negative-index-hypothesis} says that at least
$k_0$ principal curvatures are nonnegative at every point.  Moreover,
\[
 k_0>\frac{n+1}{2}.
\]
Thus the cooriented immersion is $k_0$-convex in Gromov's sense, and
\cref{thm:Gromov-filling}, applied with $N=n+1$, supplies an immersed
filling.  The conclusion follows from \cref{thm:flat-filling}.
\end{proof}

\begin{proof}[\textbf{Proof of \cref{cor:scalar-pinching}}]
Assume that \eqref{eq:negative-index-hypothesis} fails at a point
$p\in M$.  Write
\[
 \ell=\indm(A_p),
 \qquad
 s=\#\{i:\lambda_i(p)>0\}.
\]
Then
\[
 \ell\ge \left\lceil\frac{n-1}{2}\right\rceil,
 \qquad
 s\le n-\ell\le m.
\]
Since $H(p)>0$, at least one principal curvature is positive, so
$s\ge1$.  Since also $\ell\ge1$, the sum of the positive principal
curvatures,
\[
 P=\sum_{\lambda_i(p)>0}\lambda_i(p),
\]
satisfies $P>H(p)$: the omitted nonpositive principal curvatures have
strictly negative total sum.  Cauchy--Schwarz therefore gives
\[
 |A|^2(p)
 \ge \sum_{\lambda_i(p)>0}\lambda_i(p)^2
 \ge \frac{P^2}{s}
 >\frac{H^2(p)}{s}
 \ge\frac{H^2(p)}{m}.
\]
Using $2\sigma_2=H^2-|A|^2$, we obtain
\[
 2\sigma_2
 <\frac{m-1}{m}H^2(p),
\]
or equivalently
\[
 H^2(p)>\frac{2m}{m-1}\,\sigma_2,
\]
contrary to \eqref{eq:scalar-pinching}.  Hence
\eqref{eq:negative-index-hypothesis} holds everywhere, and
\cref{thm:negative-index} completes the proof.
\end{proof}

\begin{remark}\label[remark]{rem:high-dimensional-range}
The hypothesis in \cref{thm:negative-index} permits at most one
negative principal curvature when $n=4,5$, at most two when $n=6,7$,
and at most three when $n=8,9$.  As an eigenvalue-count hypothesis, it is weaker than two-convexity:
even when only one principal curvature is negative, no lower bound on
the sum of the two smallest curvatures is imposed; see
\cref{rem:convexity-comparison}.  The constants in
\cref{cor:scalar-pinching} begin with
\[
 \frac{2m}{m-1}
 =4\quad(n=4),
 \qquad
 =3\quad(n=5,6),
 \qquad
 =\frac83\quad(n=7,8).
\]
This is the threshold produced by the elementary implication from
the pinching condition to the inertia bound; no optimality claim for
the geometric sphere theorem is made here.
\end{remark}

\subsection{The remaining higher-dimensional obstruction}
\label{sec:obstruction}

The preceding results sharply constrain any possible nonround example in
dimension $n\ge4$.  For every closed constant-scalar-curvature immersion,
\cref{lem:positivity} gives
\[
 \sigma_2>0,
 \qquad H>0,
 \qquad P_1>0.
\]
If the immersion is two-convex, \cref{thm:two-convex} applies, while the
negative-inertia hypothesis \eqref{eq:negative-index-hypothesis} is covered
by \cref{thm:negative-index}.  Moreover,
\cref{cor:scalar-pinching} shows that the latter hypothesis follows from
the scalar pinching condition.  Consequently, any nonround counterexample
must contain a point $p$ at which
\begin{equation}\label{eq:necessary-bad-point}
 \indm(A_p)
 \ge \left\lceil\frac{n-1}{2}\right\rceil
 \quad\text{and}\quad
 H^2(p)>
 \frac{2\lfloor (n+1)/2\rfloor}
      {\lfloor (n+1)/2\rfloor-1}\,\sigma_2.
\end{equation}
Thus a counterexample would have to possess, somewhere, at least roughly
half as many negative as positive principal-curvature directions and at
the same time violate the explicit pinching threshold above.

These pointwise alternatives cannot be ruled out from the Gauss relation
alone.  Indeed, set
\[
 r=\left\lceil\frac{n-1}{2}\right\rceil,
 \qquad
 s=n-r=\left\lfloor\frac{n+1}{2}\right\rfloor,
\]
and consider the algebraic curvature vector
\[
 (\underbrace{-1,\ldots,-1}_{r\ \mathrm{entries}},
  \underbrace{L,\ldots,L}_{s\ \mathrm{entries}}).
\]
It satisfies
\[
 H=sL-r,
 \qquad
 \sigma_2=\binom r2-rsL+\binom s2L^2.
\]
For all sufficiently large $L$, one has $H>0$, $\sigma_2>0$, and
$H>|\lambda_i|$ for every $i$, so $P_1>0$, while
$\indm(A)=r$ violates \eqref{eq:negative-index-hypothesis}.  This is only
a pointwise algebraic model; it is not asserted to occur on a closed
constant-scalar-curvature immersion.  Its role is simply to show that the
basic admissibility consequences of \cref{lem:positivity} do not by
themselves settle the higher-dimensional problem.

The remaining issue also has a useful integral formulation.  Define
\[
 V_{\mathrm{alg}}
 =\frac1{n+1}\int_M u\,d\mu.
\]
By \cref{lem:minkowski},
\[
 V_{\mathrm{alg}}
 =\frac{n-1}{2(n+1)\sigma_2}\int_M H\,d\mu>0,
\]
so $V_{\mathrm{alg}}$ is independent of the choice of the base point
$a$.  The Minkowski identity together with \eqref{eq:newton-defect}
gives
\begin{equation}\label{eq:reverse-HK}
 \int_M\frac1H\,d\mu
 \le\frac{n+1}{n}V_{\mathrm{alg}}
 =\frac1n\int_M u\,d\mu,
\end{equation}
with equality if and only if the hypersurface is totally umbilical.  If
an immersed filling is available, \cref{lem:normal-compatibility}
identifies $V_{\mathrm{alg}}$ with the Riemannian volume of the flat
filling, and \cref{prop:HK} supplies the opposite inequality.  Hence the
unrestricted immersed problem would follow either from a more general
existence theorem for immersed fillings or from a filling-free (for
example, current-theoretic) proof of
\[
 \int_M\frac1H\,d\mu
 \ge\frac1n\int_M u\,d\mu.
\]
In this sense, the two-convexity and negative-inertia assumptions used
above serve only to solve the extension problem, through the theorems of
Huisken--Sinestrari and Gromov, respectively; once an immersed filling is
available, the rigidity step is the same Ros--Reilly argument.

\end{document}